\documentclass{elsarticle}

\hoffset=-3cm \voffset=-2.5cm
\textwidth=18cm \textheight=23cm

\usepackage{graphicx}
\usepackage{hyperref}
\usepackage{amsfonts}
\usepackage{epstopdf}
\usepackage{algorithmic}
\usepackage{amssymb,amsmath,epsfig,multirow}
\usepackage[displaymath, mathlines,running]{lineno}
\usepackage{mathtools} 
\usepackage{color}
\usepackage{lineno,hyperref}
\usepackage{rotating}
\usepackage{makeidx}
\usepackage{lscape}
\usepackage{float}
\usepackage{epsfig}
\usepackage{amsmath,amssymb}
\usepackage{enumitem}
\usepackage{epic,eepic}
\usepackage{overpic}
\usepackage{color}
\usepackage{amsfonts}
\usepackage{graphicx}
\usepackage{enumitem}
\usepackage{pgf,tikz,pgfplots}
\pgfplotsset{compat=1.15}
\usepackage{mathrsfs}
\usetikzlibrary{arrows}

\newtheorem{theorem}{Theorem}[section]

\newtheorem{proposition}[theorem]{Proposition}
\newtheorem{remark}{Remark}
\newenvironment{proof}{\smallskip\noindent{{\it Proof.}}\hskip \labelsep}%
           {\hfill\penalty10000\raisebox{-.09em}{\large\bf\rm $\blacksquare$}\par\medskip}

\newtheorem{lemma}[theorem]{Lemma}

\begin{document}
\begin{frontmatter}

\title{Numerical integration rules based on B-spline bases}\tnotetext[label1]{This research has been supported by project CIAICO/2021/227 and by grant PID2020-117211GB-I00 funded by MCIN/AEI/10.13039/501100011033}

\author[UV]{Dionisio F. Y\'a\~nez}
\ead{dionisio.yanez@uv.es}
\date{Received: date / Accepted: date}

\address[UV]{Departamento de Matem\'aticas, Facultad de Matemáticas. Universidad de Valencia. Valencia, Spain.}
\vspace{-0.5cm}

\begin{abstract}
In this work, we present some new integration formulas for any order of accuracy as an application of the B-spline relations obtained in \cite{amatetalexplicit}. The resulting rules are defined as a perturbation of the trapezoidal integration method. We prove the order of approximation and extend the results to several dimensions. Finally, some numerical experiments are performed in order to check the theoretical results.
\end{abstract}

\begin{keyword}
Integration rules \sep quasi-interpolation \sep cell-average data \sep B-spline
\end{keyword}

\end{frontmatter}

\section{Introduction}
Integration rules are a powerful tool to approximate the value of an integral of a function at an interval, evaluating it on a grid on this interval. There exist some very well-known formulas as Newton-Cotes where the key is to approximate the function using Lagrange interpolation. Therefore, we consider the following classical problem, to approximate
 \begin{equation}\label{problema0}
\int_a^b f(x)dx.
\end{equation}
For example, if we divide our interval using $N+1$ equidistance points, define $x_i=a+ih_N$, $i=0,\hdots,N$, $h_N:=\frac{b-a}{N}$, and approximate the function with a polynomial of degree one to calculate the integral in each interval $[x_{i},x_{i+1}]$, $i=0,\hdots,N-1$, we get the classical well-known formula called trapezoidal integration rule:
\begin{equation}\label{trapezoide}
T_{[a,b],N}(f):= \frac{h_N}{2}(f(x_{0})+f(x_{N})) +h_N\sum_{i=1}^{N-1}f(x_{i}),
\end{equation}
Recently, new formulas to obtain an approximation of a function from the cell-average data have been developed, \cite{amatetalexplicit}. In particular, they offer a relation between approximations based on point-value data and cell-average data using B-spline functions. In the present paper we use these results to give some new flexible integration formulas for any order of accuracy $p$, defining them as a correction of $T_{[a,b],N}$. We check that these formulas are a particular case of Euler-McClaurin rules \cite{devries} when the values of the derivatives on the extreme of interval are approximated using Taylor expansions. We structure the paper as follows: Firstly, we show the general formula and prove the order of accuracy for one dimension, these rules can be extended for $k$ dimensions just using a tensor-product strategy and finally, we present some numerical experiments and conclusions.

\section{Formulation of the problem and review}
Using the notation employed by Amat et al. in \cite{amatetalexplicit}, we suppose $f\in\mathcal{C}^{p+1}(\mathbb{R})$,
$h>0$ a constant, $B_p(x)$ the $p$-degree B-spline supported on
$I_p =\left[-\frac{p+1}{2},\frac{p+1}{2}\right]$, with equidistant knots
$S_p = \left\{-\frac{p+1}{2},\hdots,\frac{p+1}{2}\right\},$ and define the vector
$f_{n,p}=(f_{n-\left\lfloor \frac{p}{2} \right\rfloor},\hdots,f_{n+\left\lfloor \frac{p}{2} \right\rfloor}),$
with $f_i=f(ih)$ and where $\left\lfloor \cdot \right\rfloor$ is the floor function. It is well-known that the following quasi-interpolation operator approximates $f$ and it is exact for polynomials of degree up $p$, see \cite{speleers}:
\begin{equation}\label{operatorQ}
Q_p(f)(x)=\sum_{n\in \mathbb{Z}}L_{p}(f_{n,p})B_p\left(\frac{x}{h}-n \right),
\end{equation}
where $L_p:\mathbb{R}^{2\left\lfloor \frac{p}{2} \right\rfloor+1}\to \mathbb{R}$ is the linear function defined as:
\begin{equation}\label{operadorL}
L_p(f_{n,p})=\sum_{j=-\left\lfloor \frac{p}{2} \right\rfloor}^{\left\lfloor \frac{p}{2} \right\rfloor} c_{p,j} f_{n+j},
\end{equation}
with the coefficients $c_{p,j}$, $j=-\left\lfloor \frac{p}{2} \right\rfloor,\hdots,\left\lfloor \frac{p}{2} \right\rfloor$ showed in Table \ref{tablaCs} for $j=0,1,2$. A general expression for these coefficients can be found in \cite{speleers}. These values are calculated to guarantee the reproduction of polynomials of the operator $Q_p$.
\begin{table}[htbp]
  \begin{center}
   \begin{tabular}{crrrrrrr}\hline
                             & $c_{1,j}$ & $c_{2,j}$ & $c_{3,j}$ & $c_{4,j}$& $c_{5,j}$ \\    \hline
                 $ j=0$      &  $1$  & $5/4 $    & $4/3$    & $319/192$  & $73/40$ \\ 
                 $ j=1$      &       & $-1/8 $   & $-1/6$   & $-107/288$ & $-7/15$ \\ 
                 $ j=2$      &       &           &          & $47/1152$  & $13/240$ \\\hline 
   \end{tabular}
    \caption{Values $c_{p,k}$. Note that $c_{p,j}=c_{p,-j}$, $j=0,\hdots,\left\lfloor\frac{p}{2}\right\rfloor$}
    \label{tablaCs}
  \end{center}
\end{table}

With these ingredients, we introduce the problem. Let $a,b$ be two real constants and $f:[a,b]\to \mathbb{R}$ an integrable function, and denote the measure of the interval as $h_1=b-a$, we want to approximate:
\begin{equation}\label{problema1}
\int_a^b f(x)dx =\int_0^{b-a} f(x+a) dx =\int_0^{b-a} f^{a}(x) dx=\int_0^{h_1} f^{a}(x) dx=\int_{\frac {h_1}{2} -\frac{h_1}{2}}^{\frac {h_1}{2} +\frac{h_1}{2}} f^{a}(x) dx= h_1(f^a\ast\omega_{h_1})\left(\frac{h_1}{2}\right)=h_1\bar{f}^a\left(\frac{h_1}{2}\right)
\end{equation}
where $f^a:[0,b-a]\to \mathbb{R}$ defined as $f^a(x)=f(x+a)$. We also define the cell-average of $f$ as $\bar{f}^a=f^a\ast\omega_{h_1}$ where
\begin{equation*}
\omega_{h_1}(x)= \frac{1}{h_1}\chi_{[-\frac{1}{2},\frac{1}{2}]}\left(\frac{x}{h_1}\right)=\left\{
         \begin{array}{ll}
           h_1^{-1}, & \hbox{if}\,\, x\in [-\frac{h_1}{2},\frac{h_1}{2}], \\
           0, & \hbox{otherwise}.\\
         \end{array}
       \right.
\end{equation*}
A relation between the approximations using point-value and cell-average discretizations proved in \cite{amatetalexplicit} can be summarize with the following equality for polynomials of degree up to $p$:
If $P\in\Pi_{p}(\mathbb{R})$ then
\begin{equation}\label{equality}
\sum_{n\in\mathbb{Z}}L_{p+1}(\bar P_{n-\left\lfloor \frac{p+1}{2} \right\rfloor},\hdots,\bar{P}_{n+\left\lfloor \frac{p+1}{2} \right\rfloor})B_{p+1}\left(\frac{x}{h}-n\right)=\sum_{n\in\mathbb{Z}}L_{p}(P_{n-\left\lfloor \frac{p}{2} \right\rfloor},\hdots,P_{n+\left\lfloor \frac{p}{2} \right\rfloor})B_{p+1}\left(\frac{x}{h}-n\right).
\end{equation}
Note that $\bar f_i$ is the cell-average value at the interval $[ih-h/2,ih+h/2]$, i.e. $\bar{f}_{i}=\bar{f}(ih)=\frac{1}{h}\int_{ih-\frac{h}{2}}^{ih+\frac{h}{2}}f(x)dx$. Also, it is easy to check the following result.

\begin{theorem}\label{prop1}
Let $p$ be an even number, $L_{p}$ the linear function defined in Eq. \eqref{operadorL} and $P\in\Pi_{p+1}(\mathbb{R})$ then:
$$L_{p+1}\left(\bar P_{n-\frac{p}{2}},\hdots,\bar{P}_{n+\frac{p}{2}}\right)=L_{p}\left(P_{n-\frac{p}{2}},\hdots,{P}_{n+\frac{p}{2}}\right).$$
\end{theorem}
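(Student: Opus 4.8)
The plan is to use linearity and reduce the identity to checking it on a well-chosen basis of $\Pi_{p+1}(\mathbb{R})$. Since $p$ is even, $\left\lfloor\frac{p}{2}\right\rfloor=\left\lfloor\frac{p+1}{2}\right\rfloor=\frac{p}{2}$, so for each fixed $n$ both $P\mapsto L_{p}\big(P_{n-\frac p2},\dots,P_{n+\frac p2}\big)$ and $P\mapsto L_{p+1}\big(\bar P_{n-\frac p2},\dots,\bar P_{n+\frac p2}\big)$ are linear functionals on $\Pi_{p+1}(\mathbb{R})$ (point evaluation, cell averaging, and the maps $L_p,L_{p+1}$ are all linear). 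Hence it suffices to verify the equality when $P$ ranges over a basis, and I would take the basis $\{(x-nh)^{k}:0\le k\le p+1\}$, which is adapted to the very index $n$ for which we are proving the statement.

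For $0\le k\le p$ the polynomial $P(x)=(x-nh)^{k}$ lies in $\Pi_{p}(\mathbb{R})$, so Eq. \eqref{equality} applies and yields an equality of two expansions in the integer translates $\{B_{p+1}(\tfrac{x}{h}-m)\}_{m\in\mathbb{Z}}$. Because the integer translates of a (uniform) B-spline are linearly independent, their coefficients must coincide for every $m$; taking $m=n$ and using again $\left\lfloor\frac{p}{2}\right\rfloor=\left\lfloor\frac{p+1}{2}\right\rfloor=\frac p2$ gives precisely $L_{p+1}\big(\bar P_{n-\frac p2},\dots,\bar P_{n+\frac p2}\big)=L_{p}\big(P_{n-\frac p2},\dots,P_{n+\frac p2}\big)$ for this $P$. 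The slightly delicate point here is exactly this appeal to linear independence, which upgrades the functional identity \eqref{equality} to a pointwise-in-$n$ statement.

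It then remains to treat the top basis vector $P(x)=(x-nh)^{p+1}$, and this is where the hypothesis ``$p$ even'' enters. We have $P_{n+j}=\big((n+j)h-nh\big)^{p+1}=(jh)^{p+1}$ and $\bar P_{n+j}=\frac1h\int_{jh-\frac h2}^{jh+\frac h2}s^{p+1}\,ds$, and since $p+1$ is odd both of these are odd functions of the index $j\in\{-\frac p2,\dots,\frac p2\}$. Combined with the symmetry $c_{p,j}=c_{p,-j}$ and $c_{p+1,j}=c_{p+1,-j}$ recorded in Table \ref{tablaCs}, the sums $L_{p}\big(P_{n-\frac p2},\dots,P_{n+\frac p2}\big)=\sum_{j=-p/2}^{p/2}c_{p,j}(jh)^{p+1}$ and $L_{p+1}\big(\bar P_{n-\frac p2},\dots,\bar P_{n+\frac p2}\big)=\sum_{j=-p/2}^{p/2}c_{p+1,j}\bar P_{n+j}$ are each a pairing of an even sequence with an odd one over a symmetric index set, hence both vanish. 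So both sides agree (they equal $0$) on this last basis vector, and together with the previous paragraph the proof is complete. No actual computation with the explicit values of $c_{p,j}$ or $c_{p+1,j}$ is needed beyond their symmetry, which is what makes the verification straightforward.
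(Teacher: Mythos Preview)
Your argument is correct. The paper itself does not give a proof of this theorem: it simply states ``Also, it is easy to check the following result'' and moves on, so there is nothing to compare against at the level of strategy. Your route---linearity, then Eq.~\eqref{equality} coefficient-by-coefficient for degrees $\le p$, then a parity argument for the top monomial $(x-nh)^{p+1}$---is exactly the kind of verification the authors presumably had in mind, and it cleanly isolates where the hypothesis ``$p$ even'' is used (namely, it makes $p+1$ odd so the symmetric coefficients kill the top term on both sides). The one step worth flagging explicitly for a reader is the passage from the functional identity \eqref{equality} to equality of individual coefficients: you invoke linear independence of the integer translates $\{B_{p+1}(\cdot/h - m)\}_{m\in\mathbb Z}$, which is standard (indeed B-splines are \emph{locally} linearly independent), so the infinite sums with polynomially growing coefficients cause no trouble.
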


\subsection{Integration rule for one subinterval}
We start with the approximation of the integral over one interval. We suppose $P\in\Pi_{p}(\mathbb{R})$, then by \eqref{equality} we have
\begin{equation}\label{polynomial}
\bar{P}^a(x)=\sum_{n\in\mathbb{Z}}L_{p+1}(\bar{P}^a_{n-\left\lfloor \frac{p+1}{2} \right\rfloor},\hdots,\bar{P}^a_{n+\left\lfloor \frac{p+1}{2} \right\rfloor})B_{p+1}\left(\frac{x}{h_1}-n\right)=\sum_{n\in\mathbb{Z}}L_{p}(P^a_{n-\left\lfloor \frac{p}{2} \right\rfloor},\hdots,P^a_{n+\left\lfloor \frac{p}{2} \right\rfloor})B_{p+1}\left(\frac{x}{h_1}-n\right),
\end{equation}
then, we can define for any $f$ integrable function:
\begin{equation*}
\bar{f}^a\left(\frac{h_1}2\right)\approx \sum_{n\in\mathbb{Z}}L_{p}(f^a_{n-\left\lfloor \frac{p}{2} \right\rfloor},\hdots,f^a_{n+\left\lfloor \frac{p}{2} \right\rfloor})B_{p+1}\left(\frac{1}{2}-n\right).
\end{equation*}
From the support of $B_{p}$ is $I_p=[-\frac{p+1}{2},\frac{p+1}{2}]$ and for $p\geq 1$, we have
$B_{p+1}\left(\frac{p+2}{2}\right)=B_{p+1}\left(-\frac{p+2}{2}\right)=0,$ we can determine the values $n$ where $B_{p+1}\left(\frac{1}{2}-n\right)\neq 0$, thus
$ -\frac{p+2}{2}<\frac{1}{2}-n<\frac{p+2}{2}$ then $-\frac{p+1}{2} <n <\frac{p+1}{2}+1.$
Therefore, our approximation will be:
\begin{equation}\label{aprox}
\begin{split}
\int_a^b f(x)dx&=h_1\bar{f}^a\left(\frac{h_1}2\right)\approx h_1\sum_{n\in\mathbb{Z}}L_{p}(f^a_{n-\left\lfloor \frac{p}{2} \right\rfloor},\hdots,f^a_{n+\left\lfloor \frac{p}{2} \right\rfloor})B_{p+1}\left(\frac{1}{2}-n\right)\\
&=h_1\sum_{n=-\left\lfloor \frac{p}{2} \right\rfloor}^{\left\lfloor \frac{p}{2} \right\rfloor+1}L_{p}(f^a_{n-\left\lfloor \frac{p}{2} \right\rfloor},\hdots,f^a_{n+\left\lfloor \frac{p}{2} \right\rfloor})B_{p+1}\left(\frac{1}{2}-n\right)\\&=h_1\sum_{j=-2\left\lfloor \frac{p}{2} \right\rfloor}^{0}\left(\sum_{r=j-1-\left\lfloor \frac{p}{2} \right\rfloor}^{j+\left\lfloor \frac{p}{2} \right\rfloor}c_{p,r}B_{p+1}\left(r-j+\frac{1}{2}\right)\right)(f^a_{j}+f^a_{1-j})=h_1\sum_{j=-2\left\lfloor \frac{p}{2} \right\rfloor}^{2\left\lfloor \frac{p}{2} \right\rfloor+1}\tau_{p,j}f^a_{j}\\
&=h_1\sum_{j=-2\left\lfloor \frac{p}{2} \right\rfloor}^{2\left\lfloor \frac{p}{2} \right\rfloor+1}\tau_{p,j}f(x_{j}),\\
%
\end{split}
\end{equation}
where we call B-integration terms to
\begin{equation}\label{integrationterms}
\tau_{p,j}=\sum_{r=j-1-\left\lfloor \frac{p}{2} \right\rfloor}^{j+\left\lfloor \frac{p}{2} \right\rfloor} c_{p,r}B_{p+1}\left(r-j+\frac{1}{2}\right),
\end{equation}
in Table \ref{tablaTs} we show some values. Note that if $p=1$ the trapezoidal simple rule is recovered.
\begin{table}[htbp]
  \begin{center}
   \begin{tabular}{crrrrr}\hline
                             & $\tau_{1,j}$ & $\tau_{2,j}$ & $\tau_{3,j}$ & $\tau_{4,j}$& $\tau_{5,j}$       \\    \hline
                 $ j=0$      &  $1/2$   & $103/192$    & $19/36$     & $5.446148907696758e-01$   &  $5.371643518518517e-01$   \\
                 $ j=-1$      &         & $-13/384 $   & $-1/48$     & $-4.241988570601853e-02$   & $-2.918981481481481e-02$    \\
                 $ j=-2$      &         & $-1/384 $    & $-1/144 $   & $-4.626916956018520e-03$  & $-1.168981481481482e-02$   \\
                 $ j=-3$      &         &              &             & $2.421287254050926e-03$  & $3.640046296296296e-03$           \\
                 $ j=-4$      &         &              &             & $1.062463831018518e-05$   & $7.523148148148149e-05$           \\\hline
   \end{tabular}
    \caption{Values $\tau_{p,j}$, $j=0,\hdots,-4$. Note that $\tau_{p,j}=\tau_{p,1-j}.$}
    \label{tablaTs}
  \end{center}
\end{table}
\begin{lemma}\label{lema1}
Let $p$ be a natural number, and the B-integration terms $\tau_{p,j}$, $j=-2\left\lfloor \frac{p}{2} \right\rfloor,\hdots,2\left\lfloor \frac{p}{2} \right\rfloor+1$ defined in Eq. \eqref{integrationterms}, with $c_{p,j}$, showed in  Table \eqref{tablaCs}, then $\tau_{p,j}=\tau_{p,1-j}$, $\sum_{j=-2\left\lfloor \frac{p}{2} \right\rfloor}^{2\left\lfloor \frac{p}{2} \right\rfloor+1}\tau_{p,j}=1,$ and $\sum_{j=-2\left\lfloor \frac{p}{2} \right\rfloor}^{0}\tau_{p,j}=\frac{1}{2}.$
\end{lemma}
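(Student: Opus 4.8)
The plan is to establish the three claims in the order $\tau_{p,j}=\tau_{p,1-j}$, then $\sum_j\tau_{p,j}=1$, then $\sum_{j\le 0}\tau_{p,j}=\tfrac12$, since the third is an easy consequence of the first two. For the symmetry relation I would work directly from the definition \eqref{integrationterms}. Using $c_{p,r}=c_{p,-r}$ (Table \ref{tablaCs}) and the symmetry of the B-spline, $B_{p+1}(-x)=B_{p+1}(x)$, I would make the index substitution $r\mapsto 1-r$ in the sum defining $\tau_{p,1-j}$; one checks the summation range maps correctly onto the range for $\tau_{p,j}$, and the generic term $c_{p,1-r-1}B_{p+1}(1-r-(1-j)+\tfrac12)=c_{p,-r}B_{p+1}(j-r+\tfrac12)=c_{p,r}B_{p+1}(r-j+\tfrac12)$, which is exactly the generic term of $\tau_{p,j}$. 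This gives $\tau_{p,j}=\tau_{p,1-j}$.

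For $\sum_j\tau_{p,j}=1$, I would use the fact that the whole construction in \eqref{aprox} is exact on constants. Concretely, take $f\equiv 1$ on $[a,b]$: then $P\equiv 1\in\Pi_p(\mathbb{R})$, every $\bar P^a_i=1$, the operator $Q_{p+1}$ reproduces constants so the left-hand side of \eqref{polynomial} equals $1$ identically, hence $h_1\bar f^a(h_1/2)=h_1$ equals $h_1\sum_j\tau_{p,j}\cdot 1$, forcing $\sum_j\tau_{p,j}=1$. Alternatively, and perhaps cleaner to present, one can argue purely combinatorially: substitute the definition of $\tau_{p,j}$, interchange the order of summation over $j$ and $r$, use that $\sum_j c_{p,j}=1$ (the coefficients $c_{p,j}$ are normalized so that $L_p$ reproduces constants, which is stated in the text after \eqref{operadorL}), and use that the integer translates of $B_{p+1}$ form a partition of unity, so $\sum_{n\in\mathbb{Z}}B_{p+1}(\tfrac12-n)=1$; carefully re-indexing these two identities against the finite ranges in \eqref{integrationterms} yields the value $1$.

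Finally, $\sum_{j=-2\lfloor p/2\rfloor}^{0}\tau_{p,j}=\tfrac12$ follows immediately: split $\sum_{j=-2\lfloor p/2\rfloor}^{2\lfloor p/2\rfloor+1}\tau_{p,j}$ into the block $j\le 0$ and the block $j\ge 1$; the substitution $j\mapsto 1-j$ is a bijection between these two blocks, and by the symmetry $\tau_{p,j}=\tau_{p,1-j}$ the two partial sums are equal, so each equals half of the total, i.e. $\tfrac12$. The only point requiring genuine care — the main obstacle — is bookkeeping the summation limits: one must verify that the index substitutions in the symmetry argument and the order-of-summation swap in the normalization argument map the stated finite ranges exactly onto themselves (equivalently, that the terms falling outside the nominal ranges vanish because the relevant $B_{p+1}$ values are zero, as already noted in the derivation preceding \eqref{integrationterms}). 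Once the ranges are pinned down, each step is a short direct computation.
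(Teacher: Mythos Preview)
Your plan coincides with the paper's proof: the paper establishes $\tau_{p,j}=\tau_{p,1-j}$ by the index change together with $c_{p,-r}=c_{p,r}$ and $B_{p+1}(-x)=B_{p+1}(x)$, and obtains $\sum_j\tau_{p,j}=1$ by taking $P\equiv 1$ in \eqref{polynomial} and reading off \eqref{aprox}, exactly your first alternative; the third claim is not written out in the paper, and your halving argument via $j\mapsto 1-j$ is the intended one. One small correction on the bookkeeping you yourself flag: the substitution that maps the summation range of $\tau_{p,1-j}$ onto that of $\tau_{p,j}$ is $r\mapsto -r$, not $r\mapsto 1-r$ (with $r\mapsto 1-r$ the range becomes $j-\lfloor p/2\rfloor,\dots,j+1+\lfloor p/2\rfloor$, off by one, and the coefficient becomes $c_{p,1-r}$ rather than $c_{p,-r}$); once you use $r\mapsto -r$ the computation is exactly the one-line chain the paper gives.
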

\begin{proof}
 Since $B_{p+1}(x)=B_{p+1}(-x)$, for all $x\in\mathbb{R}$ and $c_{p,-r}=c_{p,r}$ for all $r=-\left\lfloor \frac{p}{2} \right\rfloor,\hdots,\left\lfloor \frac{p}{2} \right\rfloor$ we get:
\begin{equation}
\begin{split}
\tau_{p,1-j}&=\sum_{r=-j-\left\lfloor \frac{p}{2} \right\rfloor}^{-j+1+\left\lfloor \frac{p}{2} \right\rfloor} c_{p,r}B_{p+1}\left(r+j-\frac{1}{2}\right)=
\sum_{r=j-1-\left\lfloor \frac{p}{2} \right\rfloor}^{j+\left\lfloor \frac{p}{2} \right\rfloor} c_{p,-r}B_{p+1}\left(-r+j-\frac{1}{2}\right)=\sum_{r=j-1-\left\lfloor \frac{p}{2} \right\rfloor}^{j+\left\lfloor \frac{p}{2} \right\rfloor} c_{p,r}B_{p+1}\left(r-j+\frac{1}{2}\right)=\tau_{p,j}.
\end{split}
\end{equation}
 It follows that $\sum_{j=-2\left\lfloor \frac{p}{2} \right\rfloor}^{2\left\lfloor \frac{p}{2} \right\rfloor+1}\tau_{p,j}=1$ from taking a function $P\equiv 1$ in \eqref{polynomial} and employing \eqref{aprox}.
\end{proof}

\subsection{Composite rules}
In order to design the composed rules, we study the following case: we divide the interval $[0,b-a]$ using $N+1$ points, define
$h:=h_N=\frac{b-a}{N}$ and design a grid $ih$ with $0\leq i\leq N$, we approximate the integral:
\begin{equation}\label{problema2}
\int_{ih}^{ih+h} f^a(x)dx =\int_{(i+\frac{1}{2})h-\frac h2}^{(i+\frac{1}{2})h+\frac h2} f^a(x) dx =\int_{(i+\frac{1}{2})h-\frac h2}^{(i+\frac{1}{2})h+\frac h2} f^a(x) dx= h(f^a\ast\omega^1_h)\left(ih+\frac h 2\right)=h\bar{f}^a\left(ih+\frac h 2\right)
\end{equation}
using again Eq. \eqref{equality}, we get:
\begin{equation}\label{aproximacionbuena}
\begin{split}
h\bar{f}^a\left(ih+\frac h2\right)&\approx \sum_{n\in\mathbb{Z}}L_{p}(f^a_{n-\left\lfloor \frac{p}{2} \right\rfloor},\hdots,f^a_{n+\left\lfloor \frac{p}{2} \right\rfloor})B_{p+1}\left(\frac{1}{2}-(n-i)\right)=\sum_{n=i-\left\lfloor \frac{p}{2} \right\rfloor}^{i+\left\lfloor \frac{p}{2} \right\rfloor+1}L_{p}(f^a_{n-\left\lfloor \frac{p}{2} \right\rfloor},\hdots,f^a_{n+\left\lfloor \frac{p}{2} \right\rfloor})B_{p+1}\left(\frac{1}{2}-(n-i)\right)\\
&=\sum_{j=-2\left\lfloor \frac{p}{2} \right\rfloor}^{0}\left(\sum_{r=j-1-\left\lfloor \frac{p}{2} \right\rfloor}^{j+\left\lfloor \frac{p}{2} \right\rfloor}c_{p,r}B_{p+1}\left(r-j+\frac{1}{2}\right)\right)(f^a_{i+j}+f^a_{i-j+1})=\sum_{j=-2\left\lfloor \frac{p}{2} \right\rfloor}^{2\left\lfloor \frac{p}{2} \right\rfloor+1}\tau_{p,j}f^a_{i+j},\\
\end{split}
\end{equation}
We define the sum of the B-integration terms as $\xi_{p,i}=\sum_{j=-2\left\lfloor \frac{p}{2} \right\rfloor}^{i}\tau_{p,j},$
and by Lemma \ref{lema1} $\xi_{p,0}=1/2$ for all $p\in\mathbb{N}$. In Table \ref{tablaXis} we show some values $\xi_{p,i}.$
\begin{table}[htbp]
  \begin{center}
   \begin{tabular}{lrrrrr}\hline
                             & $\xi_{1,i}$ & $\xi_{2,i}$ & $\xi_{3,i}$ & $\xi_{4,i}$& $\xi_{5,i}$       \\    \hline
                 $ i=0$      &  $1/2$   & $1/2$         & $1/2$     & $1/2$                         &  $1/2$   \\
                 $ i=-1$      &         & $-7/192 $     & $-1/36$     & $-4.461489076967595e-02$   & $-3.716435185185185e-02$    \\
                 $ i=-2$      &         & $-1/384 $     & $-1/144 $   & $-2.195005063657410e-03$  & $-7.974537037037042e-03$   \\
                 $ i=-3$      &         &               &             & $2.431911892361110e-03$  & $3.715277777777778e-03$           \\
                 $ i=-4$      &         &               &             & $1.062463831018518e-05$   & $7.523148148148149e-05$           \\\hline
   \end{tabular}
    \caption{Values $\xi_{p,i}$, $i=-4,\hdots,0$. Note that $\xi_{p,i}=1-\xi_{p,-i}$.}
    \label{tablaXis}
  \end{center}
\end{table}

To calculate the composed rule we use \eqref{aproximacionbuena}. Therefore, denoting by $T_{[a,b],N}$ the trapezoidal composed rule, Eq. \eqref{trapezoide}, we get by Lemma \ref{lema1} that:
\begin{equation}\label{problema3}
\begin{split}
\int_a^b f(x)dx &=\int_0^{b-a} f(x+a) dx =\int_0^{Nh} f^{a}(x) dx=\sum_{i=0}^{N-1}\int_{ih}^{(i+1)h} f^{a}(x) dx\approx  h\sum_{i=0}^{N-1} \left(\sum_{j=-2\left\lfloor \frac{p}{2} \right\rfloor}^{2\left\lfloor \frac{p}{2} \right\rfloor+1}\tau_{p,j}f^a_{i+j}\right)\\
&= h\sum_{i=-2\left\lfloor \frac{p}{2} \right\rfloor}^{-1}\xi_{p,i}(f^a_{i}+f^a_{N-i}) + h\sum_{i=1}^{2\left\lfloor \frac{p}{2} \right\rfloor}\xi_{p,i}(f^a_{i}+f^a_{N-i})+  \frac{h}{2}(f^a_{0}+f^a_{N})+h\sum_{i=2\left\lfloor \frac{p}{2} \right\rfloor+1}^{N-2\left\lfloor \frac{p}{2} \right\rfloor-1}f^a_{i}\\
&= h\sum_{i=-2\left\lfloor \frac{p}{2} \right\rfloor}^{-1}\xi_{p,i}(f^a_{i}+f^a_{N-i}) + h\sum_{i=-2\left\lfloor \frac{p}{2} \right\rfloor}^{-1}\xi_{p,-i}(f^a_{-i}+f^a_{N+i})+  \frac{h}{2}(f^a_{0}+f^a_{N})+h\sum_{i=2\left\lfloor \frac{p}{2} \right\rfloor+1}^{N-2\left\lfloor \frac{p}{2} \right\rfloor-1}f^a_{i}\\
&= h\sum_{i=-2\left\lfloor \frac{p}{2} \right\rfloor}^{-1}\xi_{p,i}(f^a_{i}+f^a_{N-i}-f^a_{-i}-f^a_{N+i}) + \frac{h}{2}(f^a_{0}+f^a_{N})+h\sum_{i=1}^{N-1}f^a_{i}\\
&=T_{[a,b],N}(f) +h\sum_{i=1}^{2\left\lfloor \frac{p}{2} \right\rfloor}\xi_{p,-i}(f(x_{-i})-f(x_{i})+f(x_{N+i})-f(x_{N-i}))=:T_{[a,b],N}^p(f).
\end{split}
\end{equation}
We introduce some examples. When $p=1$, since $\xi_{1,0}=\frac{1}{2}$ the trapezoidal classical rule is recovered, i.e.
\begin{equation*}
\begin{split}
T_{[0,b-a],N}^1(f^a)=h\xi_{1,0}(f^a_{0}+f^a_{N}) +h\sum_{i=1}^{N-1}f^a_{i} = \frac{h}{2}(f^a_{0}+f^a_{N}) +h\sum_{i=1}^{N-1}f^a_{i}=T_{[0,b-a],N}(f^a)=T_{[a,b],N}(f).
\end{split}
\end{equation*}
In the case that $p=2$, we obtain the following rule:
\begin{equation*}
\begin{split}
T_{[a,b],N}^2(f)=&T_{[a,b],N}(f)+ h\sum_{i=1}^{2}\xi_{2,-i}(f^a_{-i}-f^a_{i}+f^a_{N+i}-f^a_{N-i})  \\
=& T_{[a,b],N}(f)-\frac{h}{384}(f(x_{-2})-f(x_{2})+f(x_{N+2})-f(x_{N-2}))-\frac{7h}{192}(f(x_{-1})-f(x_{1})+f(x_{N+1})-f(x_{N-1})),
\end{split}
\end{equation*}
and if $p=3$ we get:
\begin{equation*}
\begin{split}
T_{[a,b],N}^3(f)=& T_{[a,b],N}(f) + h\sum_{i=1}^{2}\xi_{3,-i}(f^a_{-i}-f^a_{i}+f^a_{N+i}-f^a_{N-i}) \\
=& T_{[a,b],N}(f)-\frac{h}{144}(f(x_{-2})-f(x_{2})+f(x_{N+2})-f(x_{N-2}))-\frac{h}{36}(f(x_{-1})-f(x_{1})+f(x_{N+1})-f(x_{N-1})).
\end{split}
\end{equation*}
Note that the number of function evaluations for each method concerning the trapezoidal method increases by $4\left\lfloor \frac{p}{2} \right\rfloor$. We prove the order of approximation in the following proposition.

\begin{proposition}
Let $p,N$ be natural numbers, $a,b\in\mathbb{R}$, $h=\frac{b-a}{N}$, $f\in \mathcal{C}^{p+1}(\Omega)$ if $p$ is odd and $f\in \mathcal{C}^{p+2}(\Omega)$ if $p$ is even with $[a,b]\subset [a-2\left\lfloor \frac{p}{2} \right\rfloor h,b+2\left\lfloor \frac{p}{2} \right\rfloor h] =\Omega$ and $T_{[a,b],N}^p(f)$ the integration rule defined in Eq. \eqref{problema3}, then
$$\left|\int_a^b f(x)dx - T_{[a,b],N}^p(f)\right|=\left\{
                                            \begin{array}{ll}
                                              O(h^{p+1}), & p \hbox{ is odd;} \\
                                              O(h^{p+2}), & p \hbox{ is even.}
                                            \end{array}
                                          \right.$$

\end{proposition}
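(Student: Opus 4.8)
The natural strategy is to reduce the error of $T_{[a,b],N}^p$ to the error of the local quasi-interpolation operator $Q_{p+1}$ acting on cell-averages, which reproduces polynomials of degree $p$ (by the exactness of $L_p$ / $L_{p+1}$ and the B-spline identity \eqref{equality}), and then invoke a standard Bramble--Hilbert / Taylor-expansion argument on each subinterval. Concretely, on the subinterval $[ih,(i+1)h]$ we have the exact identity $\int_{ih}^{(i+1)h} f^a(x)\,dx = h\bar f^a(ih+h/2)$, and the rule replaces $h\bar f^a(ih+h/2)$ by $h\sum_j \tau_{p,j} f^a_{i+j}$, which is exactly $h$ times the value at $ih+h/2$ of the point-value quasi-interpolant $Q_p(f^a)$ (equivalently, by \eqref{equality}, the cell-average quasi-interpolant of order $p+1$). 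Since $Q_p$ reproduces $\Pi_p$, subtracting the degree-$p$ Taylor polynomial of $f^a$ at $ih+h/2$ kills the leading terms, and what remains is controlled by the first non-reproduced derivative.

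First I would establish the local error estimate: for a fixed subinterval, write $f^a = P + R$ where $P$ is the Taylor polynomial of degree $p$ (resp. $p+1$) at the midpoint $c_i = ih+h/2$ and $R$ is the remainder. Both the true integral and the quadrature term are exact on $P$, so the local error equals $\int_{ih}^{(i+1)h} R - h\sum_j \tau_{p,j} R(c_i + (j-1/2)h)$; using $\sum_j \tau_{p,j}=1$ and the symmetry $\tau_{p,j}=\tau_{p,1-j}$ from Lemma \ref{lema1}, together with $|R^{(k)}|\le C h^{?}$ bounds from $f\in\mathcal C^{p+1}$ (or $\mathcal C^{p+2}$), one gets a local error of size $O(h^{p+2})$ when $p$ is odd. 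When $p$ is even the symmetry $\tau_{p,j}=\tau_{p,1-j}$ forces the degree-$(p+1)$ term (which is odd about $c_i$) to be integrated exactly by both the midpoint-symmetric quadrature and the true integral, so in fact $\Pi_{p+1}$ is reproduced locally and the local error improves to $O(h^{p+3})$.

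The remaining point is the boundary correction. The derivation in \eqref{problema3} shows $T_{[a,b],N}^p(f)$ is obtained by summing the local rules $h\sum_j \tau_{p,j} f^a_{i+j}$ over $i=0,\dots,N-1$ and then \emph{algebraically} reorganizing the out-of-range terms $f^a_{-i}, f^a_{N+i}$ into the $\xi_{p,-i}$ corrections; this reorganization is an exact identity, not an approximation, so the global error is literally $\sum_{i=0}^{N-1}$ of the local errors. Summing $N = (b-a)/h$ local errors of size $O(h^{p+2})$ (odd) or $O(h^{p+3})$ (even) gives $O(h^{p+1})$ and $O(h^{p+2})$ respectively. I would also note that the hypothesis $[a,b]\subset\Omega$ with the enlarged domain is exactly what guarantees all the evaluation points $f(x_j)$, $j=-2\lfloor p/2\rfloor,\dots,N+2\lfloor p/2\rfloor$, and the required derivatives live inside a set on which $f$ is $\mathcal C^{p+1}$ (resp. $\mathcal C^{p+2}$), so the Taylor bounds are uniform in $i$.

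The main obstacle is the even-$p$ case: one must be careful that the local quadrature really does reproduce $\Pi_{p+1}$ and not merely $\Pi_p$. This does not follow from the construction of the $c_{p,j}$ alone (which only guarantees $\Pi_p$); it follows from combining (i) Theorem \ref{prop1}, which says that for even $p$ the order-$(p+1)$ cell-average functional $L_{p+1}$ applied to averages of a $\Pi_{p+1}$ polynomial coincides with $L_p$ applied to its point values, and (ii) the fact that the resulting quadrature weights are symmetric about the midpoint, so odd-degree pieces integrate to zero on both sides. Making this "$+1$ order from symmetry" argument precise — i.e. verifying that the degree-$(p+1)$ monomial centered at $c_i$ contributes zero to the local error — is the one place where a genuine (if short) computation, rather than a citation, is needed; everything else is bookkeeping with the already-proved Lemma \ref{lema1} and standard Taylor remainder estimates.
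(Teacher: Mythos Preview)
Your proposal is correct and follows essentially the same route as the paper: the paper's entire proof is the single sentence ``direct by Th.~\ref{prop1} and Eq.~\eqref{equality}'', and your plan is precisely a fleshed-out version of that citation---local exactness on $\Pi_p$ from \eqref{equality}, the extra order for even $p$ from Theorem~\ref{prop1}, then Taylor remainder plus summation over $N$ subintervals. One small remark: your symmetry argument for the even case is correct but redundant, since Theorem~\ref{prop1} combined with the fact that $Q_{p+1}$ reproduces $\Pi_{p+1}$ already gives exactness on $\Pi_{p+1}$ directly, without a separate parity computation.
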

The proof of this proposition is direct by Th. \ref{prop1} and Eq. \eqref{equality}. Finally, to extend our results to several dimensions we just use tensor-product strategy. This technique is analyzed and tested in \cite{amatetalexplicit}.

\begin{remark}
We can connect our formulas with the well-known rules developed by Euler-McClaurin. We introduce the general formula (see more details in \cite{devries}):
$$E_{[a,b],N}^{2p}(f)=T_{[a,b],N}(f)+h\sum_{k=1}^{p} \frac{B_{2k}}{(2k)!}(f^{2k-1)}(b)-f^{2k-1)}(a))+O(h^{2p+2}).$$
where $B_{l}$ are the Bernoulli numbers. It is not difficult to check that using an appropriate choice of the approximation to the derivative we recover our methods. Therefore, for example, if $p=1$, we approximate the derivative at $f'_j=f'(jh)$ using Taylor expansions with four points around $f_j$ obtaining:
\begin{equation}
f'_j=\alpha(f_{j-2}-f_{j+2})+\left(2\alpha+\frac{1}{2h}\right)(f_{j+1}-f_{j-1})+O(h^2),\\
\end{equation}
where $\alpha\in\mathbb{R}$ is a constant. We define a new family of integration rules as:
\begin{equation*}
\begin{split}
M_{\alpha,[a,b],N}^2(f):=&T_{[a,b],N}(f)+\frac{h^2}{12}\alpha(f^a_{-2}-f^a_{2}+f^a_{N+2}-f^a_{N-2})-\frac{h^2}{12}\left(2\alpha+\frac{1}{2h}\right)(f^a_{-1}-f^a_{1}+f^a_{N+1}-f^a_{N-1}).
\end{split}
\end{equation*}
Note that if $\alpha=-\frac{1}{32h}$ then $2\alpha+\frac{1}{2h}=\frac{7}{16h}$ and:
\begin{equation*}
\begin{split}
M_{-\frac{1}{32h},[a,b],N}^2\left(f\right)
&=T_{[a,b],N}(f)-\frac{h}{384}(f^a_{-2}-f_{2}+f^a_{N+2}-f^a_{N-2})-\frac{h^2}{12}\left(\frac{7}{16h}\right)(f^a_{-1}-f^a_{1}+f^a_{N+1}-f^a_{N-1})\\
&=T_{[a,b],N}(f)-\frac{h}{384}(f^a_{-2}-f_{2}+f^a_{N+2}-f^a_{N-2})-\frac{7h}{192}(f^a_{-1}-f^a_{1}+f^a_{N+1}-f^a_{N-1})=T^2_{[a,b],N}(f).
\end{split}
\end{equation*}
If $\alpha=-\frac{1}{12h}$ then $2\alpha+\frac{1}{2h}=\frac{1}{3h}$ and
\begin{equation*}
\begin{split}
M_{-\frac{1}{12h},[a,b],N}^2\left(f\right)
&=T_{[a,b],N}(f)-\frac{h}{144}(f^a_{-2}-f^a_{2}+f^a_{N+2}-f^a_{N-2})-\frac{h}{36}(f^a_{-1}-f^a_{1}+f^a_{N+1}-f^a_{N-1})=T^3_{[a,b],N}(f).
\end{split}
\end{equation*}
\end{remark}

\section{Numerical examples and conclusions}
We divide the numerical experiments in two parts in order to analyze the numerical order obtained and the number of evaluations comparing them with fourth order Simpson method defined as ($N$ even number):
$$S_{[a,b],N}=\frac{h}{3}\left(f(x_0)+f(x_N)+2\sum_{i=1}^{\frac{N}{2}-1}f(x_{2i})+4\sum_{i=1}^{\frac{N}{2}}f(x_{2i-1})\right). $$
We define the error and the numerical order for each integral of the function $f$ at the interval $[a,b]$ as:
 $$e^p_N=\left|\int_a^b f(x)dx - T_{[a,b],N}^p(f)\right|, \quad e^S_N=\left|\int_a^b f(x)dx - S_{[a,b],N}(f)\right|, \quad o_N^{\Xi}=\log_2\left(\frac{e_{N}^{\Xi}}{e_{2N}^{\Xi}}\right),$$
where $T_{[a,b],N}^p(f)$ is the integral rule defined in Eq. \eqref{problema3} and $\Xi=p,S$.
Note that for using the new B-spline integration formulas the number of evaluations is $M=N+1+4\left\lfloor \frac{p}{2} \right\rfloor$.

\subsection{Analysis of the numerical order}
We perform a classical example to solve the problem \eqref{problema0} for $a=0$, $b=1$ and the function $f_1(x)=e^{x^2}$ using the rules \eqref{problema3} with different $p$ values for $N=$ 80, 160, 320. In this case, we take as real solution the value 1.4626517459071815. We can see in Table \ref{tablaerrores} that for each $p$ even number, the methods $T^p$ and $T^{p+1}$ has the same order of approximation but the error produces for $p$ (even) methods are lower than $p+1$ methods. We can observe that, in this example, the necessary extra points for the B-spline formulas produces a bigger error when we compare with classical Simpson rule. We analyze it with more detail in next subsection.
 \begin{table}[H]
\begin{center}
\begin{tabular}{cccccccccccccc}
\hline
$N$   &  &    $e_N^1$       & $o_N^1$& &   $e_N^2$     & $o_N^2$ &&         $e_N^3$  & $o_N^3$  &&         $e_N^4$  & $o_N^4$   \\  \cline{1-1} \cline{3-4} \cline{6-7}  \cline{9-10}  \cline{12-13}
  80  &  &   $7.0787e-05 $ &   1.9999  & &$   2.7197e-08$    &    4.0010  &&$3.8726e-08$    &    4.001   &&$   2.6387e-11$    &      6.0035    \\
  160 &  &   $1.7697e-05 $ &   2.0000  & &$   1.6995e-09$    &    4.0003  &&$2.4197e-09$    &    4.000   &&$   4.1167e-13$    &      6.0022    \\
  320 &  &   $4.4243e-06 $ &   2.0000  & &$   1.0622e-10$    &    4.0001  &&$1.5122e-10$    &    4.000   &&$   5.9952e-15$    &      6.1015    \\
\hline   
$N$   &  &    $e_N^5$       & $o_N^5$& &   $e_N^6$     & $o_N^6$ &&         $e_N^7$  & $o_N^7$  &&         $e_{N}^S$  & $o_{N}^S$   \\  \cline{1-1} \cline{3-4} \cline{6-7}  \cline{9-10}  \cline{12-13}
  80  &  &   $   3.7213e-11$ &   6.0040  & &$   3.6637e-14$    &  8.0099    &&$   5.0182e-14$    &     8.0145  &&$     7.3717e-09$    &    3.9987\\
  160 &  &   $   5.8065e-13$ &   6.0020  & &$   4.4409e-16$    &  -         &&$   6.6613e-16$    &     -       &&$     4.6083e-10$    &    3.9997\\
  320 &  &   $   8.6597e-15$ &   6.0672  & &$   4.4409e-16$    &  -         &&$   4.4409e-16$    &     -       &&$     2.8804e-11$    &    3.9999\\
\hline
\end{tabular}
    \caption{Errors and numerical orders calculating the integral at [0,1] of $f_1(x)=e^{x^2}$.}
    \label{tablaerrores}
\end{center}
\end{table}
\vspace{-1cm}
\subsection{Analysis of the number of evaluations}
As second example we approximate the integral of Runge function $f_2(x)=(1+25x^2)^{-1}$ at the interval $[-1,1]$
and show our results in Table \ref{tablaerrores2}. In order to compare with the classical rules we perform the same example using the Simpson rule but using a bigger number of evaluations. We will use, in this case, the notation $e^\Xi_M$, where $\Xi=p,S$ and $M$ is the number of evaluations. We can observe in Table \ref{tablaerrores2} that for this type of function where the interpolation process is not satisfactory, B-spline integration formula achieves the same order but lower error.
 \begin{table}[H]
\begin{center}
\begin{tabular}{ccccc}
  \hline
$M$   &      $e_M^1$       &   $e_M^2$     &         $e_M^3$  &         $e_M^S$   \\
 \hline
 15   &   $    1.8614e-03  $  &$   2.4084e-03    $    &$    2.4369e-03$    &$  5.3393e-03$     \\
 25   &   $    1.1867e-04  $  &$    7.6903e-06    $   &$    9.1477e-06$    &$  2.2269e-04$    \\
 45   &   $    3.0805e-05  $  &$    2.0297e-07    $   &$    2.8981e-07$    &$  4.5289e-07$    \\
 85   &   $    7.7038e-06  $  &$    1.2627e-08    $   &$    1.7991e-08$    &$  2.8097e-09$    \\
\hline
\end{tabular}
   \caption{Errors calculating the integral at [-1,1] of $f_2(x)=\frac{1}{1+25x^2}$ using different methods.}
    \label{tablaerrores2}
\end{center}
\end{table}
\vspace{-0.75cm}
In conclusion, we have developed some new numerical integration formulas for any order of approximation $p$. In some cases, the error is reduced faster with a lower number of evaluations in comparison to using the classical algorithms. Moreover, the flexibility and the simplicity of the new formulas allow us to make new high order methods with a lower cost. As future work, we suggest using a strategy based on hierarchical bases proposed by Speleers in \cite{speleers} in order to reduce the number of evaluations.
\vspace{-0.5cm}
{\small
\bibliographystyle{these}
}

\end{document}